\def\RR{\mathbb{R}}
\def\CC{\mathbb{C}}
\def\NN{\mathbb{N}}
\def\FF{\mathbb{F}}
\def\x{\mathbf{x}}
\def\fx{\FF[\x]}
\DeclareMathOperator{\cf}{cf}
\DeclareMathOperator{\hc}{hc}
\DeclareMathOperator{\hd}{hd}
\DeclareMathOperator{\prin}{Prin}
\DeclareMathOperator{\ppar}{Par}
\DeclareMathOperator{\lex}{lex}
\DeclareMathOperator{\card}{card}
\DeclareMathOperator{\hmf}{Hom}
\newtheorem{theorem}{Theorem}
\newtheorem{thm}[theorem]{Theorem}
\newtheorem{lem}[theorem]{Lemma}
\newtheorem{prop}[theorem]{Proposition}
\newtheorem{cor}[theorem]{Corollary}
\theoremstyle{definition}
\newtheorem{ex}[theorem]{Example}
\begin{document}
\title[A nullstellensatz for pde]{A nullstellensatz for linear partial differential equations with polynomial coefficients}

\author{J. Cimpri\v c}

\begin{abstract}
In this paper an \textit{equation} means a homogeneous linear partial differential equation in $n$ unknown functions
of $m$ variables  which has real or complex polynomial coefficients. The \textit{solution set} consists of all $n$-tuples of real or complex analytic 
functions that satisfy the equation. For a given system of  equations we would like to characterize its \textit{Weyl closure},
i.e. the set of all equations that vanish on the solution set of the given system. It is well-known that in many special
cases the Weyl closure is equal to $B_m(\FF)N \cap A_m(\FF)^n$ where $\FF \in \{\RR,\CC\}$, the algebra $A_m(\FF)$ (respectively $B_m(\FF)$)
consists of all linear partial differential operators with coefficients in $\FF[x_1,\ldots,x_m]$ (respectively $\FF(x_1,\ldots,x_m)$)
and $N$ is the submodule of $A_m(\FF)^n$ generated by the given system. Our main result is that this formula holds in general.
In particular, we do not assume that the module $A_m(\FF)^n/N$ has finite rank which used to be a standard assumption.
Our approach works also for the real case which was not possible with previous methods. Moreover, our proof is constructive
as it depends only on the Riquier-Janet theory.
\end{abstract}

\thanks{Address: University of Ljubljana, Faculty of Mathematics and Physics, Department of Mathematics, Jadranska 21, SI-1000 Ljubljana, Slovenia}

\thanks{E-mail: cimpric@fmf.uni-lj.si}

\thanks{Supported by grant P1-0222 of the Slovenian Research Agency}


\keywords{algebraic theory of systems, symbolic methods for systems}

\date{\today}

\maketitle

\section{Introduction}

Let $\FF$ be either $\RR$ or $\CC$ and let $m$ and $n$ be integers. 
A homogeneous linear partial differential equation with polynomial coefficients 
in $n$ unknown functions $u^1,\ldots,u^n$ of $m$ variables $x_1,\ldots,x_m$ can be written as
$$p_1[u^1]+\ldots+p_n[u^n]=0$$
where linear partial differential operators $p_1,\ldots,p_n$ have polynomial coefficients;
in other words,  $p_1,\ldots,p_n$ belong to the Weyl algebra $A_m(\FF)$ which is generated by $x_1,\ldots,x_m$ and $\frac{\partial}{\partial x_1},\ldots,\frac{\partial}{\partial x_m}$.
Its solution at point $(x_1^0,\ldots,x_m^0) \in \FF^n$ is an $n$-tuple of convergent power series in $x_1-x_1^0,\ldots,x_m-x_m^0$ that satisfy the equation. 
The solution set consists of all solutions at all points of  $\FF^n$. 

The aim of this paper is to prove a nullstellensatz type result for such equations.
Consider a system of $k$ equations
\begin{align}
\notag p_{11}[u^1]+\ldots+p_{1n}[u^n] &= 0 \\
\label{peq} \vdots \\
\notag p_{k1}[u^1]+\ldots+p_{kn}[u^n] &= 0 
\end{align}
We would like to determine when another equation
\begin{equation}
\label{qeq}
q_1[u^1]+\ldots+q_n[u^n]=0
\end{equation}
vanishes on the solution set of \eqref{peq}. 
Our main result is that this happens if and only if there exists a nonzero polynomial $w \in \FF[x_1,\ldots,x_m]$
and a $k$-tuple of linear partial differential operators $(h_1,\ldots,h_k) \in A_m(\FF)^k$ such that the following matrix equation is true 
\begin{equation}
w \left[ \begin{array}{ccc}
q_1 & \ldots & q_n
\end{array} \right]=
\left[ \begin{array}{ccc}
h_1 & \ldots & h_k
\end{array} \right]
\left[ \begin{array}{ccc}
p_{11} & \ldots & p_{1n} \\
\vdots & & \vdots \\
p_{k1} & \ldots & p_{kn}
\end{array} \right]
\end{equation}

The set of all equations \eqref{qeq} that vanish on the solution set of the system \eqref{peq} is usually called
the \textit{Weyl closure} of \eqref{peq}. Let $N$ be the submodule of $A_m(\FF)^n$ that is generated by the
rows of the $p_{ij}$ matrix. Our result can be rephrased as follows: the Weyl closure of the system \eqref{peq}
is equal to $$\FF(x_1,\ldots,x_m)N \cap A_m(\FF)^n.$$

For constant coefficients our main result follows from \cite[Examples 1.13 and 1.13 (real), Assumption 2.55, Theorems 2.61 and 4.54]{oberst}.
Note that \cite{oberst} also covers other notions of solution which is further developed in \cite{shankar}.
For holonomic systems (with $\FF=\CC$) our main result follows from \cite[Proposition 2.1.9]{tsaiphd}.
This result uses global solutions instead of our local solutions. We will discuss it in subsection \ref{subsec52}.

The proof of our main result uses Riquier-Janet theory. Riquier bases are Weyl algebra analogues of Gr\" obner bases
while Janet's algorithm is an analogue of Buchberger's algorithm. Riquier existence theorems are
generalizations of the Cauchy-Kovalevskaya theorem. For a recent survey of this theory, see \cite[Chapter 4]{zerz}.

\section{Preliminaries}
\label{sec2}

Let $\FF$ be either $\RR$ or $\CC$. For every $m \in \NN$,\footnote{$\NN=\{0,1,2,\ldots\}$ is the set of natural numbers.} 
the \textit{Weyl algebra} $A_m(\FF)$ is the $\FF$-algebra with generators $x_1,\ldots,x_m,D_1,\ldots,D_m$ and relations
$x_i x_j=x_j x_i$, $D_i D_j = D_j D_i$ and $D_j x_i -x_i D_j=\varepsilon_{ij} \cdot 1$ for all $i,j=1,\ldots,m$,
where $\varepsilon_{ij}=1$ if $i=j$  and $\varepsilon_{ij}=0$ if $i \ne j$.
Clearly, $A_m(\FF)$ is a left module over $\FF[\x]:=\FF[x_1,\ldots,x_m]$.
We will also need its localization $B_m(\FF):=(\FF[\x]\setminus \{0\})^{-1} A_m(\FF)$
which is a left vector space over $\FF(\x):=\FF(x_1,\ldots,x_m)$.
It is well-known that $A_m(\FF)$ and $B_m(\FF)$ are Noetherian domains, 
see e.g. \cite[pp. 19--20]{mcr}, (which implies the Ore property by \cite[pp. 46--47]{mcr}).
For every $n \in \NN$, the left $A_m(\FF)$-module $A_m(\FF)^n$ and the left $B_m(\FF)$-module $B_m(\FF)^n$ are also Noetherian. 
For additional ring-theoretic information on $A_m(\FF)$ and $B_m(\FF)$ see \cite{stafford, quadrat}.

An element of $B_m(\FF)^n$ is a \textit{derivative} if it is
of the form $\delta_\alpha^i := D^{\alpha}\mathbf{e}_i$ where $\alpha=(\alpha_1,\ldots,\alpha_m) \in \NN^m$,
$D^\alpha:=D_1^{\alpha_1} \cdots D_m^{\alpha_m}$ and $\mathbf{e}_i=(\varepsilon_{i1},\ldots,\varepsilon_{in})$
is the $i$-th standard basis vector of $B_m(\FF)^n$.
The set of all derivatives will be denoted by $\Delta$.
Every element $\mathbf{p} \in B_m(\FF)^n$ can be converted into a \textit{standard form}, i.e. it can be expressed uniquely 
as a left $\FF(\x)$-linear combination of different derivatives.
We write $\cf(\mathbf{p})(\delta)$ for the coefficient of $\mathbf{p}$ at $\delta \in \Delta$, 
so $\mathbf{p}=\sum_{\delta \in \Delta} \cf(\mathbf{p})(\delta)\delta$.
The \textit{standard ranking} is a linear ordering $\prec$ of the set $\Delta$ which is defined by
$$\delta_\alpha^i \prec \delta_\beta^j \Leftrightarrow (\vert \alpha \vert,i,\alpha_1,\ldots,\alpha_{m-1}) \le_{\lex}
(\vert \beta \vert,j,\beta_1,\ldots,\beta_{m-1})$$
where  $\vert \alpha\vert:=\alpha_1+\ldots+\alpha_m$ and $\le_{\lex}$ is the usual lexicographic ordering.
It determines the notions of the leading coefficient $\hc \mathbf{p} $ and the highest derivative $\hd \mathbf{p}$
of an element $\mathbf{p} \in B_m(\FF)$. If $\hd \mathbf{p} =\delta_\alpha^i$ we define the \textit{degree} of $\mathbf{p}$ by $\deg \mathbf{p}:= \vert \alpha\vert$.
The standard ranking satisfies the following property (which defines a \textit{ranking}):
if $\delta_\alpha^i \prec \delta_\beta^j$ for some $\alpha,\beta \in \NN^m$ and $i,j=1,\ldots,n$,
then $\delta_{\alpha+\gamma}^i \prec \delta_{\beta+\gamma}^j$ for all $\gamma \in \NN^m$.
The standard ranking belongs to several interesting classes of rankings that appear in the literature
(positive rankings, orderly rankings, Riquier rankings); see \cite{rustphd}.
Similar remarks apply to elements of $A_m(\FF)^n$.

For a given point $\x^0=(x_1^0,\ldots,x_m^0) \in \FF^m$ we will write $\FF[[\x-\x^0]]$ for the set of all formal power series in 
$x_1-x_1^0,\ldots,x_m-x_m^0$. We say that a formal power series is \textit{convergent} if it has a nonzero convergence radius.
In this case it defines an analytic function on a ball around $\x^0$.
Every element $p \in B_m(\FF)$ which is defined (i.e. whose coefficients are defined) at $\x^0$ induces in a natural way 
a mapping $u \mapsto p[u]$ from $\FF[[\x-\x^0]]$ to itself which respects convergence.
Similarly, every element $\mathbf{p} =(p_1,\ldots,p_n) \in B_m(\FF)^n$ which is defined at $\x^0$
induces a mapping from $\FF[[\x-\x^0]]^n$ to $\FF[[\x-\x^0]]$ by
$\mathbf{u}=(u^1,\ldots,u^n) \mapsto \mathbf{p}[\mathbf{u}]:=p_1[u^1]+\ldots+p_n[u^n]$.

For every finite subset $\{\mathbf{p}_1,\ldots,\mathbf{p}_k\}$ of $B_m(\FF)^n$, we have a \textit{system} 
\begin{equation}
\label{sys}
\mathbf{p}_1[\mathbf{u}]=\ldots=\mathbf{p}_k[\mathbf{u}]=0
\end{equation}
of partial differential equations corresponding to it. 
We say that an element $\mathbf{u} \in \FF[[\x-\x^0]]^n$
is a \textit{formal solution} of system \eqref{sys} at point $\x^0 \in \FF^m$
if all $\mathbf{p}_1,\ldots,\mathbf{p}_k$ are defined at $\x^0$ and
$\mathbf{u}$ satisfies $\eqref{sys}$ in $\FF[[\x-\x^0]]$.
If a formal solution at $\x^0$ is convergent, then the corresponding analytic function 
solves the system on a ball around $\x^0$.
If two finite subsets of $B_m(\FF)^n$ generate the same submodule of $B_m(\FF)^n$ then the
corresponding systems are \textit{equivalent}, i.e. they have the same formal and the same analytic solutions
at every point $\x^0$ from some open dense subset of $\FF^m$.

We will now summarize the Riquier-Janet theory. Let $N$ be a submodule of $B_m(\FF)^n$ and let $\mathcal{N}$ be a finite
generating set of $N$. A procedure called the \textit{Janet's algorithm}\footnotemark
\footnotetext{The original reference is \cite{janet}. 
A recent monography is \cite[Section 2.1]{robertz}. 
We use the terminology from \cite[chapter 5]{rustphd}.}
transforms $\mathcal{N}$ into a better finite generating set
$\mathcal{M}$ that we call a \textit{Riquier basis}. The idea is to transform each element
$a \delta+L \in \mathcal{N}$ (where $a \in \FF(\x)$, $\delta \in \Delta$ and $\hd L \prec \delta$)
into a substitution rule $\delta \mapsto -a^{-1}L$ that is used to reduce other elements of $\mathcal{N}$. We must also ensure that
by differentiating the substitution rules for $\delta_\alpha^i$ and $\delta_\beta^i$ (when they exist) we get only one substitution rule for $\delta_{\alpha+\beta}^i$.
By definition, all elements of $\mathcal{M}$ are monic.
The system  corresponding to $\mathcal{M}$ is equivalent to the system corresponding to $\mathcal{N}$ but it is much easier to solve.

The procedure to formally solve the system corresponding to $\mathcal{M}$ is given by the Formal Riquier Existence Theorem. 
The idea is to split the set $\Delta$ into two parts, the set of \textit{principal derivatives} $\prin \mathcal{M}$ which is defined by 
$$\prin \mathcal{M} := \{\delta \in \Delta \mid \delta=D^\alpha \hd \mathbf{f} \text{ for some } \alpha \in \NN^m \text{ and some } \mathbf{f} \in \mathcal{M}\}$$
and the set  of \textit{parametric derivatives} $\ppar \mathcal{M}:=\Delta \setminus \prin \mathcal{M}$. Pick a point $\x^0$ in which all
elements of $\mathcal{M}$ are defined. For each parametric derivative, we can specify an initial condition in $\x^0$. We then use the equations from $\mathcal{M}$
to (uniquely) compute the values of principal derivatives at $\x^0$ and thus obtain a formal solution of the system corresponding to $\mathcal{M}$.
If the set $\ppar \mathcal{M}$ is empty, then the system corresponding to $\mathcal{M}$ has only the trivial solution.
We refer the reader to \cite[Theorem 2]{rustart} or to \cite{rustphd} for the details,  including the details about Riquier bases.

Finally, the Analytic Riquier Existence Theorem states that the formal solution of the system defined by $\mathcal{M}$
is convergent if all initial determinations are convergent. Recall that for each $i=1,\ldots,n$ the \textit{initial determination} of $u^i$
is the formal power series with support $\{\alpha \in \NN^m \mid \delta_\alpha^i \in \ppar \mathcal{M}\}$ and with coefficients determined by the
initial conditions. We refer the reader to \cite[Chapter VIII]{ritt} for the proof.  The original reference is \cite{riquier}.
We do not use the full generality of this result since we only work with linear partial differential equations.
Reference \cite{rustphd} claims a generalization of the original result from Riquier to orderly rankings but
this has been disputed in \cite{lemaire}. This is not a problem for us because  the standard ranking is a Riquier ranking.

\section{A technical result}

The aim of this section is to prove the following technical result. For every integer $s$ we 
write $I_s=\{\alpha \in \NN^m \mid \vert \alpha \vert \le s\}$ and $\Delta_s=\{\delta_\alpha^i \in \Delta \mid \alpha \in I_s,i=1,\ldots,n\}$.

\begin{prop}
\label{prop1}
Let $\mathcal{M}$ be a Riquier basis in $B_m(\FF)^n$. 
Let $s_0$ be the maximum of degrees of all elements from $\mathcal{M}$. (Recall that degrees are defined with respect to the standard ranking.) 
We claim that for every integer $s \ge s_0$, every point $\x^0 \in \FF^m$ in which all elements of $\mathcal{M}$ are defined
(note that all $D^\beta \mathbf{p}$ are defined in every point in which $\mathbf{p}$ is defined)
and every $c \in \FF^{\Delta_s}$ the following are equivalent.
\begin{enumerate}
\item There exists a convergent $\mathbf{u} \in \FF[[\x-\x^0]]^n$ such that 
\begin{enumerate}
\item[(a)]  $\mathbf{p}[\mathbf{u}]=0$ for every $\mathbf{p} \in \mathcal{M}$ and 
\item[(b)] $\delta[\mathbf{u}](\x^0)=c(\delta)$ for every $\delta \in \Delta_s$.
\end{enumerate}
\item For every $\mathbf{p} \in \mathcal{M}$ and every $\beta \in I_{s-\deg \mathbf{p}}$, we have that
$$\sum_{\delta \in \Delta_s} \cf(D^\beta \mathbf{p})(\delta)\big|_{\x^0} c(\delta)=0.$$
\end{enumerate}
\end{prop}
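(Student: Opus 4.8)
The implication $(1)\Rightarrow(2)$ is the easy direction and I would dispatch it first. If $\mathbf{u}$ is a convergent solution with $\mathbf{p}[\mathbf{u}]=0$ for all $\mathbf{p}\in\mathcal M$, then for any $\beta$ we have $(D^\beta\mathbf{p})[\mathbf{u}] = D^\beta(\mathbf{p}[\mathbf{u}]) = 0$, hence evaluating at $\x^0$ gives $\sum_{\delta\in\Delta}\cf(D^\beta\mathbf{p})(\delta)\big|_{\x^0}\,\delta[\mathbf{u}](\x^0)=0$. The point is that when $\beta\in I_{s-\deg\mathbf{p}}$, the highest derivative appearing in $D^\beta\mathbf{p}$ has order at most $s$, so every $\delta$ with $\cf(D^\beta\mathbf{p})(\delta)\ne 0$ lies in $\Delta_s$; thus the sum may be restricted to $\delta\in\Delta_s$ and we may substitute $\delta[\mathbf{u}](\x^0)=c(\delta)$ from (b). This gives (2).

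For $(2)\Rightarrow(1)$ the strategy is to invoke the Formal and Analytic Riquier Existence Theorems from Section~\ref{sec2}. The data $c\in\FF^{\Delta_s}$ restricts to an assignment of values on the parametric derivatives in $\Delta_s$, i.e. on $\ppar\mathcal M\cap\Delta_s$; I would use precisely these values as the (finitely many nonzero) initial conditions for the parametric derivatives and set all parametric derivatives of order $>s$ to zero. Since only finitely many initial determinations are nonzero and each is a polynomial, they are all convergent, so the Analytic Riquier Existence Theorem produces a convergent $\mathbf{u}\in\FF[[\x-\x^0]]^n$ satisfying (a). What must be checked is that this $\mathbf{u}$ also satisfies (b), namely that $\delta[\mathbf{u}](\x^0)=c(\delta)$ for \emph{every} $\delta\in\Delta_s$, not just the parametric ones. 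For the parametric $\delta\in\Delta_s$ this holds by construction of the initial conditions. For a principal derivative $\delta=\delta_\alpha^i\in\Delta_s$, the Riquier solution procedure computes $\delta[\mathbf{u}](\x^0)$ by using an equation from $\mathcal M$: there is $\mathbf{p}\in\mathcal M$ with $\hd\mathbf{p}=\delta_{\alpha'}^i$ and $\alpha=\alpha'+\beta$ for some $\beta$, and since $\delta\in\Delta_s$ we have $|\alpha|\le s$, hence $|\beta|=|\alpha|-\deg\mathbf{p}\le s-\deg\mathbf{p}$, i.e. $\beta\in I_{s-\deg\mathbf{p}}$. The equation $(D^\beta\mathbf{p})[\mathbf{u}]=0$ evaluated at $\x^0$ expresses $\cf(D^\beta\mathbf{p})(\delta)\big|_{\x^0}\,\delta[\mathbf{u}](\x^0)$ as a linear combination of $\delta'[\mathbf{u}](\x^0)$ over derivatives $\delta'\prec\delta$ (all in $\Delta_s$), with leading coefficient $\cf(D^\beta\mathbf{p})(\delta)\big|_{\x^0}$ nonzero — here I use $s\ge s_0$ and that $\mathbf{p}$ is monic so that differentiating does not annihilate the leading coefficient and $\x^0$ does not kill it. Comparing this with the corresponding relation from hypothesis (2), and inducting on $\prec$ over $\delta\in\Delta_s$, one gets $\delta[\mathbf{u}](\x^0)=c(\delta)$.

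The main obstacle, and the step deserving the most care, is this inductive matching: one must verify that the linear recursion by which the Riquier procedure determines the Taylor coefficients of $\mathbf{u}$ at $\x^0$ \emph{up to order $s$} coincides, coefficient by coefficient, with the linear system imposed by (2), and that the ranking $\prec$ provides a well-founded order in which to run the induction — i.e. that each principal $\delta\in\Delta_s$ is resolved in terms of strictly $\prec$-smaller elements of $\Delta_s$ only. This requires the ranking property (compatibility with translation by $\gamma\in\NN^m$) recalled in Section~\ref{sec2}, the fact that $\mathcal M$ being a Riquier basis makes the substitution rules confluent (so the value of a principal derivative does not depend on which $\mathbf{p}$ and $\beta$ are used to compute it), and the bound $s\ge s_0$ ensuring that the relevant $\beta$ always lies in $I_{s-\deg\mathbf{p}}$ so hypothesis (2) is actually applicable. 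A secondary point to address cleanly is that the support of $D^\beta\mathbf{p}$ for $\beta\in I_{s-\deg\mathbf{p}}$ is contained in $\Delta_s$, which is what makes the finite sums over $\Delta_s$ in (2) meaningful and lets us ignore all higher-order parametric data.
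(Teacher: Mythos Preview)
Your proposal is correct and follows essentially the same approach as the paper: for $(2)\Rightarrow(1)$ the paper likewise sets all parametric initial data above order $s$ to zero, invokes the Analytic Riquier Existence Theorem (the initial determinations being polynomials), and verifies (b) on principal derivatives by induction on $\prec$, using that the leading coefficient of $D^\beta\mathbf{p}$ is exactly $1$ by monicness. The only minor additions in the paper are a separate treatment of the degenerate case $\ppar\mathcal{M}=\emptyset$ (trivial solution), and the observation that your concern about confluence is unnecessary since the induction goes through for any single choice of $\mathbf{p},\beta$ realizing the given principal derivative.
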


\begin{proof}
To prove that (1) implies (2) we multiply (a) with $D^{\beta}$, convert into standard form, insert $\x^0$ and finally apply (b).
Suppose now that (2) is true. If $\ppar \mathcal{M}$ is empty, then $\mathcal{M}$ must contain elements with highest derivatives $\delta_0^i=\mathbf{e}_i$ for all $i$.
Then assumption (2) implies that $c(\delta)=0$ for every $\delta \in \Delta_s$. Now the trivial solution satisfies (1). 
If $\ppar \mathcal{M}$ is nonempty, we can proceed as in the Formal Riquier Existence Theorem. 
We compute the formal solution $\mathbf{u}=(u^1,\ldots,u^n)$ of the system defined by $\mathcal{M}$ that satisfies the following initial conditions
$$\delta[\mathbf{u}](\x^0):=\left\{ \begin{array}{cc} 
c(\delta) & \text{ if } \delta \in \ppar\mathcal{M} \cap \Delta_s \\ 
0 & \text{ if } \delta \in \ppar\mathcal{M} \setminus \Delta_s 
\end{array} \right.$$
By construction, $\mathbf{u}$ satisfies (a). Let us show now that $\mathbf{u}$ is analytic.
For each $i=1,\ldots,n$, the initial determination of $u^i$, i.e. the formal power series
$$\sum_{\alpha \in \NN^m \atop \delta_\alpha^i \in \ppar\mathcal{N}} \frac{D^\alpha u^i(\x^0)}{\alpha!}(\x-\x^0)^\alpha=
\sum_{\alpha \in \NN^m \atop \delta_\alpha^i \in \ppar\mathcal{N}}\frac{\delta_\alpha^i[\mathbf{u}](\x^0)}{\alpha!}(\x-\x^0)^\alpha
=\!\!\sum_{\alpha \in \NN^m \atop \delta_\alpha^i \in \ppar\mathcal{N} \cap \Delta_s}\!\!\frac{c(\delta_\alpha^i)}{\alpha!}(\x-\x^0)^\alpha$$
is a polynomial. By the Analytic Riquier Existence Theorem\footnotemark\footnotetext{See the last paragraph of Section \ref{sec2}.}
it follows that the formal power series for $\mathbf{u}$ is convergent. 
It remains to show that $\mathbf{u}$ satisfies (b). By construction, we already know that
\begin{equation}
\label{prineq}
\delta[\mathbf{u}](\x^0)=c(\delta)
\end{equation}
holds for every $\delta \in \ppar\mathcal{M} \cap \Delta_s$. We claim that \eqref{prineq} also holds for every $\delta \in \prin \mathcal{M} \cap \Delta_s$.
We will prove this claim by induction.
Pick any $\delta_\alpha^i \in \prin \mathcal{M} \cap \Delta_s$ and assume that \eqref{prineq} holds for all $\delta \prec \delta_\alpha^i$.
By the definition of $\prin \mathcal{M}$ there exists $\mathbf{p} \in \mathcal{M}$ and $\beta \in \NN^m$ such that $\delta_\alpha^i =D^\beta \hd \mathbf{p}$.
 Now assumption (2) implies that 
$$\sum_{\delta \prec \delta_\alpha^i} \cf(D^\beta \mathbf{p})(\delta)\big|_{\x^0} c(\delta)+\cf(D^\beta \mathbf{p})(\delta_\alpha^i)\big|_{\x^0}c(\delta_\alpha^i)=0.$$
On the other hand, by multiplying the equation $\mathbf{p}[\mathbf{u}]=0$ with $D^\beta$, converting into the standard form and inserting $\x^0$ we obtain that
$$\sum_{\delta \prec \delta_\alpha^i} \cf(D^\beta \mathbf{p})(\delta)\big|_{\x^0} \delta[\mathbf{u}](\x^0)+
\cf(D^\beta \mathbf{p})(\delta_\alpha^i)\big|_{\x^0}\delta_\alpha^i[\mathbf{u}](\x^0)=0.$$
Now, the induction hypothesis implies that
$$\cf(D^\beta \mathbf{p})(\delta_\alpha^i)\big|_{\x^0}c(\delta_\alpha^i)=
\cf(D^\beta \mathbf{p})(\delta_\alpha^i)\big|_{\x^0}\delta_\alpha^i[\mathbf{u}](\x^0)$$
The fact that all elements of $\mathcal{M}$ are monic implies that $\cf(D^\beta \mathbf{p})(\delta_\alpha^i)\big|_{\x^0}=1$, so
$$c(\delta_\alpha^i)=\delta_\alpha^i[\mathbf{u}](\x^0)$$
which completes our induction and proves the claim.
\end{proof}

\section{Proof of the main result}

We will prove a slight generalization of the promised result. Namely, that for every nonempty open set $U \subseteq \FF^m$
we can restrict our solution set from a subset of $\bigcup_{\x^0 \in \FF^m} \FF[[\x-\x^0]]$ to a subset of $\bigcup_{\x^0 \in U} \FF[[\x-\x^0]]$.
 We will use several times that a nonzero polynomial from $\FF[\x]$ cannot
vanish on a nonempty open subset of $\FF^m$. It follows that the zero set of a nonzero polynomial has the property that 
its relative complement in any nonempty open subset of $\FF^m$ is dense in that subset.

We will need the following auxiliary observation:

\begin{lem}
\label{lem1}
Pick $t \in \NN$ and let $\langle\cdot,\cdot\rangle$ be the standard inner product on $\FF^t$.
We claim that for every $\mathbf{g}_1,\ldots,\mathbf{g}_k,\mathbf{f} \in \FF(\x)^t$ 
the following are equivalent:
\begin{enumerate}
\item There exists a nonempty open subset $W \subseteq \FF^m$ on which all $\mathbf{g}_1,\ldots,\mathbf{g}_k,\mathbf{f}$ are defined
such that for every $\x^0 \in W$ and for every $\mathbf{c} \in \FF^t$ which satisfy
$\langle \mathbf{g}_1(\x^0),\mathbf{c} \rangle=\ldots=\langle \mathbf{g}_k(\x^0),\mathbf{c} \rangle=0$ we have that 
$\langle \mathbf{f}(\x^0),\mathbf{c} \rangle=0$. 
\item $\mathbf{f} \in \FF(\x) \mathbf{g}_1+\ldots+\FF(\x) \mathbf{g}_k$.
\end{enumerate}
\end{lem}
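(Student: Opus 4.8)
The implication $(2) \Rightarrow (1)$ is the easy direction: if $\mathbf{f} = \sum_j r_j \mathbf{g}_j$ with $r_j \in \FF(\x)$, then on any nonempty open set $W$ where all the $\mathbf{g}_j$, $\mathbf{f}$ and all $r_j$ are simultaneously defined (a cofinite-type condition, so $W$ exists), specializing at $\x^0 \in W$ gives $\mathbf{f}(\x^0) = \sum_j r_j(\x^0) \mathbf{g}_j(\x^0)$, and hence $\langle \mathbf{f}(\x^0),\mathbf{c}\rangle = \sum_j r_j(\x^0)\langle \mathbf{g}_j(\x^0),\mathbf{c}\rangle = 0$ whenever $\mathbf{c}$ is orthogonal to all the $\mathbf{g}_j(\x^0)$. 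No openness subtlety is needed here beyond avoiding the poles.

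For the hard direction $(1) \Rightarrow (2)$, the plan is to argue by linear algebra over the field $\FF(\x)$ together with a specialization argument. Let $r = \dim_{\FF(\x)} \spn\{\mathbf{g}_1,\ldots,\mathbf{g}_k\}$ and, after reindexing, assume $\mathbf{g}_1,\ldots,\mathbf{g}_r$ are $\FF(\x)$-linearly independent. Form the $(r+1)\times t$ matrix $G$ over $\FF(\x)$ with rows $\mathbf{g}_1,\ldots,\mathbf{g}_r,\mathbf{f}$. We want to show $\rank_{\FF(\x)} G = r$, i.e. that every $(r+1)\times(r+1)$ minor of $G$ vanishes identically; this is equivalent to $\mathbf{f} \in \spn_{\FF(\x)}\{\mathbf{g}_1,\ldots,\mathbf{g}_r\} = \spn_{\FF(\x)}\{\mathbf{g}_1,\ldots,\mathbf{g}_k\}$. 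Suppose toward a contradiction that some $(r+1)\times(r+1)$ minor $M(\x)$ of $G$ is not the zero polynomial (after clearing denominators we may take the entries of $G$ to be polynomials, at the cost of a scalar). Consider the nonzero polynomial which is the product of $M(\x)$ with all the denominators appearing in $\mathbf{g}_1,\ldots,\mathbf{g}_k,\mathbf{f}$ and with a fixed nonzero $r\times r$ minor of $[\mathbf{g}_1;\ldots;\mathbf{g}_r]$. Its complement in $W$ is dense in $W$, so we may pick $\x^0 \in W$ at which this polynomial is nonzero. Then at $\x^0$: the vectors $\mathbf{g}_1(\x^0),\ldots,\mathbf{g}_r(\x^0)$ are linearly independent, and the $(r+1)\times t$ matrix with rows $\mathbf{g}_1(\x^0),\ldots,\mathbf{g}_r(\x^0),\mathbf{f}(\x^0)$ has rank $r+1$, so $\mathbf{f}(\x^0) \notin \spn\{\mathbf{g}_1(\x^0),\ldots,\mathbf{g}_r(\x^0)\}$.

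Now I would produce the contradicting vector $\mathbf{c}$. Since $\spn\{\mathbf{g}_1(\x^0),\ldots,\mathbf{g}_k(\x^0)\} = \spn\{\mathbf{g}_1(\x^0),\ldots,\mathbf{g}_r(\x^0)\}$ has dimension $r$ and does not contain $\mathbf{f}(\x^0)$, the orthogonal complement of $\spn\{\mathbf{g}_1(\x^0),\ldots,\mathbf{g}_k(\x^0)\}$ in $\FF^t$ (with respect to the standard bilinear form $\langle\cdot,\cdot\rangle$) is not contained in the hyperplane $\{\mathbf{c} : \langle \mathbf{f}(\x^0),\mathbf{c}\rangle = 0\}$ — here one uses the elementary fact that for a subspace $V \subseteq \FF^t$ and a vector $\mathbf{f}(\x^0) \notin V$, there is a $\mathbf{c} \perp V$ with $\langle \mathbf{f}(\x^0),\mathbf{c}\rangle \ne 0$; over $\CC$ this uses the symmetric (not Hermitian) form, but the statement is purely algebraic and holds over any field since $\langle\cdot,\cdot\rangle$ is nondegenerate and $V + \spn\{\mathbf{f}(\x^0)\}$ strictly contains $V$, so $(V+\spn\{\mathbf{f}(\x^0)\})^\perp \subsetneq V^\perp$. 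Pick such a $\mathbf{c} \in \FF^t$: it satisfies $\langle \mathbf{g}_j(\x^0),\mathbf{c}\rangle = 0$ for all $j$ but $\langle \mathbf{f}(\x^0),\mathbf{c}\rangle \ne 0$, contradicting (1). Hence no such minor $M$ exists, $\rank_{\FF(\x)} G = r$, and $(2)$ follows.

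The main obstacle I anticipate is purely bookkeeping: making sure the single nonzero polynomial whose nonvanishing I demand at $\x^0$ simultaneously controls (i) the poles of all the data, (ii) the linear independence of $\mathbf{g}_1(\x^0),\ldots,\mathbf{g}_r(\x^0)$, and (iii) the rank drop for $\mathbf{f}$ — and then invoking the density remark from the start of Section 4 to find $\x^0 \in W$ avoiding its zero set. The genuinely mathematical content is the rank-stability-under-specialization argument, which is standard, plus the orthogonal-complement fact, which needs the observation that the standard bilinear form is nondegenerate over both $\RR$ and $\CC$; no positivity is used, so the real and complex cases are handled uniformly.
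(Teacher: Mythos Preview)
Your proposal is correct; both directions go through as you describe. The only minor quibble is that the paper's ``standard inner product'' on $\CC^t$ is the Hermitian one (as is visible from the conjugation $\mathbf{c}=\overline{\mathbf{v}(\x^0)}^T$ in the paper's own argument), not the symmetric bilinear form you mention; but since your argument only uses nondegeneracy and the double--orthocomplement identity $(V^\perp)^\perp=V$, it goes through unchanged for either form. Also, the equality of $\mathrm{span}\{\mathbf{g}_1(\x^0),\ldots,\mathbf{g}_k(\x^0)\}$ with $\mathrm{span}\{\mathbf{g}_1(\x^0),\ldots,\mathbf{g}_r(\x^0)\}$ is justified (even without controlling the poles of the expressing coefficients) because every $(r{+}1)\times(r{+}1)$ minor of $[\mathbf{g}_1;\ldots;\mathbf{g}_k]$ vanishes identically in $\FF(\x)$ and hence at $\x^0$.

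Your route is genuinely different from the paper's. The paper argues as follows: for any $\mathbf{v}\in\FF(\x)^t$ with $G\mathbf{v}=0$ (where $G$ has rows $\mathbf{g}_1,\ldots,\mathbf{g}_k$), specialize at \emph{every} $\x^0\in W$ with $\mathbf{c}=\overline{\mathbf{v}(\x^0)}$ to conclude $\mathbf{f}\mathbf{v}$ vanishes on $W$, hence is zero in $\FF(\x)$; then, since $\ker G\subseteq\ker\mathbf{f}$, the linear functional $G\mathbf{v}\mapsto\mathbf{f}\mathbf{v}$ is well defined on the image of $G$ and extends to $\FF(\x)^k$, giving $\mathbf{f}=\sum_j\phi(\mathbf{e}_j)\mathbf{g}_j$. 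By contrast, you track rank via minors and need the hypothesis only at a \emph{single} well--chosen $\x^0\in W$ to derive a contradiction. Your argument is thus slightly more elementary (no factor--through trick) and in effect proves a nominally stronger statement (condition~(1) need only hold at a Zariski--generic point of $W$); the paper's argument is a bit cleaner conceptually and avoids the bookkeeping of choosing the right collection of nonvanishing minors.
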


\begin{proof}
If (2) is true then $\mathbf{f}=\sum_{j=1}^k h_j \mathbf{g}_j$ for some $h_j \in \FF(\x)$.
Let $p$ be the product of denominators of all $h_j$ and of all components of  $\mathbf{f}$ and of all components of all $\mathbf{g}_j$.
The set $W:=\{\x^0 \in \FF^m \mid p(\x^0) \ne 0\}$ is an open subset of $\FF^m$ on which $\mathbf{f}$ and all $\mathbf{g}_j$  are defined.
Pick any $\x^0 \in W$ and any $\mathbf{c} \in \FF^t$ such that 
$\langle \mathbf{g}_j(\x^0),\mathbf{c} \rangle=0$ for all $j=1,\ldots,k$ and note that
$\langle \mathbf{f}(\x^0),\mathbf{c} \rangle=\sum_{j=1}^k h_j(\x^0) \langle \mathbf{g}(\x^0), \mathbf{c} \rangle=0$.
So, (1) is true.

Suppose now that (1) is true. Let $G$ be the matrix with rows $\mathbf{g}_1,\ldots,\mathbf{g}_k$ and let 
$\mathbf{v} \in \FF(\x)^t$ be a column vector such that $G\,\mathbf{v}=0$. We claim that $\mathbf{f}\,\mathbf{v}=0$.
We may assume that $\mathbf{v} \in \FF[\x]^t$. Pick any $\x^0 \in W$,
write $\mathbf{c}=\overline{\mathbf{v}(\x^0)}^T$ and note that 
$\langle \mathbf{g}_j(\x^0),\mathbf{c} \rangle=0$ for all $j=1,\ldots,k$.
By (1), it follows that $\mathbf{f}(\x^0) \mathbf{v}(\x^0)=\langle \mathbf{f}(\x^0),\mathbf{c} \rangle=0$.
We proved that $\mathbf{f}\,\mathbf{v}$ vanishes on $W$. As $\mathbf{f}$ is defined on $W$, it follows that
the numerator of $\mathbf{f}\,\mathbf{v}$ vanishes on $W$. Thus $\mathbf{f}\,\mathbf{v}=0$ in $\FF(\x)$.
Now we use a standard linear algebra trick. We define a $\FF(\x)$-linear function 
$$\phi \colon \FF(\x)^k \to \FF(\x), \quad \phi(\mathbf{u})=\left\{\begin{array}{cc}
\mathbf{f}\,\mathbf{v} & \text{ if } \mathbf{u}=G\,\mathbf{v} \text{ for some } \mathbf{v} \in \FF[\x]^t\\
0  & \text{ if } \mathbf{u} \ne G\,\mathbf{v} \text{ for every } \mathbf{v} \in \FF[\x]^t
\end{array} \right.$$
Since $G\,\mathbf{v}=0$ implies $\mathbf{f}\,\mathbf{v}=0$, $\phi$ is well-defined.
By construction, we have that $\phi(G\,\mathbf{v})=\mathbf{f}\,\mathbf{v}$ for every $\mathbf{v} \in \FF(\x)^t$.
It follows that $\mathbf{f}=\sum_{j=1}^k\phi(\mathbf{e}_j) \mathbf{g}_j$ 
where $\mathbf{e}_j$ is the $j$-th standard basis vector of $\FF(\x)^k$. So, (2) is true.
\end{proof}

We are now ready for the proof of our main result.

\begin{thm}
\label{mainthm}
Let $U$ be a nonempty open subset of $\FF^m$.
For every submodule $N$ of $A_m(\FF)^n$ and every element $\mathbf{q} \in A_m(\FF)^n$, the following are equivalent:
\begin{enumerate}
\item[(1)] Every convergent $\mathbf{u} \in \bigcup_{\x^0 \in \FF^m} \FF[[\x-\x^0]]^n$ which solves $\mathbf{p}[\mathbf{u}]=0$ for all $\mathbf{p} \in N$,
also solves $\mathbf{q}[\mathbf{u}]=0$.
\item[(1')] Every convergent $\mathbf{u} \in \bigcup_{\x^0 \in U} \FF[[\x-\x^0]]^n$ which solves $\mathbf{p}[\mathbf{u}]=0$ for all $\mathbf{p} \in N$,
also solves $\mathbf{q}[\mathbf{u}]=0$.
\item[(2)] There exists a nonzero $w \in \fx$ such that $w \, \mathbf{q} \in N$.
\end{enumerate}
\end{thm}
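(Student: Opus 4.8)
The plan is to prove (1)$\Rightarrow$(1')$\Rightarrow$(2)$\Rightarrow$(1). The implication (1)$\Rightarrow$(1') is immediate, since (1') tests $\mathbf{q}$ against a subfamily of the solutions appearing in (1). For (2)$\Rightarrow$(1), pick a nonzero $w\in\fx$ with $w\,\mathbf{q}\in N$; then every convergent $\mathbf{u}$ solving all of $N$ satisfies $w\cdot(\mathbf{q}[\mathbf{u}])=(w\,\mathbf{q})[\mathbf{u}]=0$, and since $w$ is a nonzero polynomial and $\mathbf{q}[\mathbf{u}]$ is analytic, $\mathbf{q}[\mathbf{u}]=0$. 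It is convenient to record that (2) is equivalent to $\mathbf{q}\in M$, where $M:=B_m(\FF)N$: because $\fx\setminus\{0\}$ is commutative, every finite subset of $B_m(\FF)$ has a common left denominator in $\fx\setminus\{0\}$, so from $\mathbf{q}=\sum_j b_j\mathbf{p}_j$ with $b_j\in B_m(\FF)$ and $\mathbf{p}_j\in N$ one gets $w\,\mathbf{q}=\sum_j a_j\mathbf{p}_j\in N$ for a suitable nonzero $w\in\fx$ and $a_j\in A_m(\FF)$; the converse is trivial. We may also assume $\mathbf{q}\ne 0$, the other case being vacuous.

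The substance is (1')$\Rightarrow$(2). First run Janet's algorithm on a finite generating set $\mathbf{p}_1,\dots,\mathbf{p}_k$ of $N$; since $M=B_m(\FF)N=\sum_j B_m(\FF)\mathbf{p}_j$, this same set generates $M$ over $B_m(\FF)$, so the algorithm produces a Riquier basis $\mathcal{M}$ of $M$, and we put $s_0:=\max\{\deg\mathbf{p}:\mathbf{p}\in\mathcal{M}\}$. As every element of $B_m(\FF)$ is an $\FF(\x)$-combination of the $D^\beta$, the family $\{D^\beta\mathbf{p}:\mathbf{p}\in\mathcal{M},\ \beta\in\NN^m\}$ spans $M$ over $\FF(\x)$, with $\deg D^\beta\mathbf{p}=|\beta|+\deg\mathbf{p}$. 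Fix any integer $s\ge\max(s_0,\deg\mathbf{q})$ and let $\pi_s$ send an element of $B_m(\FF)^n$ of degree $\le s$ to its coordinate vector in $\FF(\x)^{\Delta_s}$; it is injective on such elements. The reduction is then: if $\pi_s(\mathbf{q})$ lies in the $\FF(\x)$-span of the finite set $\mathcal{G}_s:=\{\pi_s(D^\beta\mathbf{p}):\mathbf{p}\in\mathcal{M},\ \beta\in I_{s-\deg\mathbf{p}}\}$, then comparing degrees and using injectivity of $\pi_s$ forces $\mathbf{q}\in M$, hence (2).

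To establish this membership I would apply Lemma \ref{lem1} with $t:=\card\Delta_s$, the vectors of $\mathcal{G}_s$ playing the role of $\mathbf{g}_1,\dots,\mathbf{g}_k$, and $\mathbf{f}:=\pi_s(\mathbf{q})$; its conclusion (2) is exactly what is wanted, so it suffices to check its hypothesis (1). Let $W$ be the intersection of $U$ with the dense open locus where all elements of $\mathcal{M}$ are defined; the entries of $\mathbf{f}$ are polynomials, and those of the $\mathbf{g}$'s are defined on $W$. Pick $\x^0\in W$ and $\mathbf{c}\in\FF^{\Delta_s}$ with $\langle\mathbf{g}(\x^0),\mathbf{c}\rangle=0$ for all $\mathbf{g}\in\mathcal{G}_s$. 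Unwinding the definitions, this says precisely that $\overline{\mathbf{c}}$ (with $\overline{\mathbf{c}}=\mathbf{c}$ when $\FF=\RR$) satisfies condition (2) of Proposition \ref{prop1}, which therefore supplies a convergent $\mathbf{u}\in\FF[[\x-\x^0]]^n$ with $\mathbf{p}[\mathbf{u}]=0$ for all $\mathbf{p}\in\mathcal{M}$ and $\delta[\mathbf{u}](\x^0)=\overline{c(\delta)}$ for all $\delta\in\Delta_s$. Such a $\mathbf{u}$ solves all of $N$: given $\mathbf{p}\in N\subseteq M=B_m(\FF)\mathcal{M}$, write $\mathbf{p}=\sum_i b_i\mathbf{q}_i$ with $\mathbf{q}_i\in\mathcal{M}$, $b_i\in B_m(\FF)$, clear a common left polynomial denominator $r\in\fx\setminus\{0\}$ to get $r\,\mathbf{p}=\sum_i a_i\mathbf{q}_i$ with $a_i\in A_m(\FF)$, and apply both sides to $\mathbf{u}$: the right side vanishes, so $r\cdot(\mathbf{p}[\mathbf{u}])=0$ and hence $\mathbf{p}[\mathbf{u}]=0$. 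As $\mathbf{u}$ is convergent and based at $\x^0\in U$, hypothesis (1') gives $\mathbf{q}[\mathbf{u}]=0$; in particular $\mathbf{q}[\mathbf{u}](\x^0)=0$, which, expanding $\mathbf{q}[\mathbf{u}]$ in standard form and using $s\ge\deg\mathbf{q}$ together with the jet identities, becomes $\langle\mathbf{f}(\x^0),\mathbf{c}\rangle=\sum_{\delta\in\Delta_s}\cf(\mathbf{q})(\delta)|_{\x^0}\,\overline{c(\delta)}=0$. This is exactly hypothesis (1) of Lemma \ref{lem1}, and the proof is complete.

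The main obstacle I anticipate is bookkeeping rather than a conceptual leap: one must carefully translate between the Hermitian orthogonality conditions of Lemma \ref{lem1}, the coefficient relations $\sum_\delta\cf(D^\beta\mathbf{p})(\delta)|_{\x^0}c(\delta)=0$ of Proposition \ref{prop1}, and membership in $M$ as read off through the finite-dimensional truncations $\pi_s$, while tracking both the (harmless but ubiquitous) complex conjugation and the polynomial denominators that obstruct evaluating rational operators at $\x^0$. Once this dictionary is fixed, each individual step follows directly from the results already proved.
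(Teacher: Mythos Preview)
Your proof is correct and follows essentially the same route as the paper's: the cycle (1)$\Rightarrow$(1')$\Rightarrow$(2)$\Rightarrow$(1), with the substantive implication (1')$\Rightarrow$(2) obtained by passing to a Riquier basis of $B_m(\FF)N$, truncating to $\Delta_s$, invoking Proposition~\ref{prop1} to manufacture a convergent solution with prescribed $s$-jet, and then closing with Lemma~\ref{lem1}. Your treatment of the conjugation in the complex case, of the denominator-clearing step needed to pass from ``$\mathbf{u}$ solves $\mathcal{M}$'' to ``$\mathbf{u}$ solves $N$'', and of the injectivity of $\pi_s$ on elements of degree $\le s$ matches the paper's reasoning, only spelled out a bit more explicitly.
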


\begin{proof}
Clearly, (1) implies (1'). 

To show that (2) implies (1), one must show that $(w \mathbf{q})[\mathbf{u}]=0$ implies $\mathbf{q}[\mathbf{u}]=0$.
This follows from continuity of analytic functions (and their derivatives) and the fact that the complement of the zero set of $w$ is dense
in any ball around $\x^0$.

To show that (1') implies (2), note first that $\FF(\x) N$ is a submodule  of $B_m(\FF)^n$. 
Pick a Riquier basis $\mathbf{p}_1,\ldots,\mathbf{p}_k$ of $\FF(\x)N$ and write
$$s=\max\{\deg \mathbf{q},\deg \mathbf{p}_1,\ldots,\deg \mathbf{p}_k\}, \quad t= \card \Delta_s.$$
The standard ranking identifies $\Delta_s$ with $\{1,\ldots,t\}$, $\FF^{\Delta_s}$ with $\FF^t$ and $\FF(\x)^{\Delta_s}$ with $\FF(\x)^t$.
Let $\cf_s \colon B_m(\FF)^n \to \FF(\x)^{\Delta_s}$ be the compositum of $\cf \colon B_m(\FF) \to \FF(\x)^{\Delta}$ 
with the restriction map $\FF(\x)^{\Delta} \to \FF(\x)^{\Delta_s}$.

We claim that elements $\mathbf{f}:=\cf_s(\mathbf{q})$ and $\mathbf{g}_{j,\beta}:=\cf_s(D^\beta \mathbf{p}_j)$ (for $j=1,\ldots,k$ and $\beta \in I_{s-\deg \mathbf{p}_j}$)
satisfy part (1) of Lemma \ref{lem1}. The set $W$ of all $\x^0 \in U$ in which $\mathbf{f}$ and all $\mathbf{g}_{j,\beta}$ are defined is clearly nonempty and open.
Pick any $\x^0 \in W$ and any $\mathbf{c}=(\overline{c(\delta)})_{\delta \in \Delta_s} \in \FF^{\Delta_s}$ 
such that $\langle \mathbf{g}_{j,\beta}(\x^0),\mathbf{c} \rangle=0$ for all $j$ and $\beta$.
Note that part (2) of Proposition \ref{prop1} is satisfied since 
$\sum_{\delta \in \Delta_s} \cf(D^\beta \mathbf{p}_j)(\delta)\big|_{\x^0} c(\delta)=
\sum_{\delta \in \Delta_s} \mathbf{g}_{j,\beta}(\delta)\big|_{\x^0} \overline{\mathbf{c}(\delta)}=
\langle \mathbf{g}_{j,\beta}\big|_{\x^0}, \mathbf{c} \rangle =0$
for every $\mathbf{p}_j$ and every $\beta \in I_{s-\deg \mathbf{p}_j}$.
By part (1) of Proposition \ref{prop1}, there exists a convergent $\mathbf{u} \in \FF[[\x-\x^0]]^n$ such that
$\mathbf{p}_j[\mathbf{u}]=0$ for every $j=1,\ldots,k$ and $\delta[\mathbf{u}](\x^0)=c(\delta)$ for every $\delta \in \Delta_s$.
It follows that $\mathbf{p}[\mathbf{u}]=0$ for every $\mathbf{p} \in N$. (This requires a continuity argument as above, 
as $\mathbf{p} \in \sum_{j=1}^k B_m(\FF) \mathbf{p}_j$ implies only that $(z \mathbf{p})[\mathbf{u}]=0$ for some nonzero $z \in \fx$.)
By assumption (1') it follows that $\mathbf{q}[\mathbf{u}]=0$. If we insert $\x^0$ and use that
$\delta[\mathbf{u}](\x^0)=c(\delta)$, we get that $\langle \mathbf{f}(\x^0),\mathbf{c} \rangle=0$. 
This proves the claim. Now, Lemma \ref{lem1} implies that 
$$\mathbf{f} \in \sum_{j=1}^k \sum_{\beta \in I_{s-\deg \mathbf{p}_j}} \FF(\x) \mathbf{g}_{j,\beta}.$$
Since $\sum_{\delta \in \Delta_s} \mathbf{f}(\delta)\delta=\mathbf{q}$ and $\sum_{\delta \in \Delta_s} \mathbf{g_{j,\beta}}(\delta)\delta=\mathbf{D^\beta \mathbf{p}}_j$, we obtain
$$\mathbf{q} \in \sum_{j=1}^k \sum_{\beta \in I_{s-\deg \mathbf{p}_j}} \FF(\x) D^\beta \mathbf{p}_j \subset \sum_{j=1}^k B_m(\FF)\mathbf{p}_j=\FF(\x)\cdot N $$
which implies (2).
\end{proof}

\section{Comments and examples}

\subsection{Simplifications in the $m=n=1$ case}

If $m=1$ then $B_m(\FF)$ is a principal left ideal domain by \cite[Theorem 1.5.9 (ii)]{mcr}.
If $n=1$ then every submodule of $B_m(\FF)^n$ is a left ideal of $B_m(\FF)$.
Therefore, if $m=n=1$ then every submodule of $B_m(\FF)^n$ is principal.
Let $I$ be a left ideal of $B_1(\FF)$ and let $p=\sum_{i=0}^{s_0} p_i(x)D^i$, where $p_{s_0} =1$, be its principal generator.
The set $\mathcal{I}=\{p\}$ is then a Riquier basis of $I$.
We have that $\Delta=\{D^n, n \in \NN\}$ and its standard ranking comes from the usual ordering of $\NN$.
We can decompose $\Delta$ into $\ppar \mathcal{I}=\{D^n \mid n=0,\ldots,s_0-1\}$ and $\prin \mathcal{I}=\{D^n \mid n \ge s_0\}$.
Pick a point $\x^0$ in which all coefficients of $p$ are defined.
The Analytic Riquier Existence Theorem reduces to the well-known fact that the initial value problem 
$\sum_{i=0}^{s_0} p_i(x)u^{(i)}(x)=0$, $(u(x^0),u'(x^0),\ldots, u^{(s_0-1)}(x^0))=\mathbf{c}$ 
has a unique convergent power series solution for each $\mathbf{c} \in \FF^{s_0}$.
Apart from these simplifications the length of the proof of Theorem \ref{mainthm} in the $m=n=1$ case remains the same as in the general case.

\subsection{Nonsingular points}\label{subsec52}

We define a singular point of system \eqref{peq} as a point (in $\FF^m$) that belongs to its singular locus, see \cite[Definition 2.1.3]{tsaiphd}.
If $m=n=1$ this coincides with the usual definition. 
System \eqref{peq} has a nonsingular point if and only if the module $A_m(\FF)^n/N$, where submodule $N$ is generated by the rows of the $[p_{ij}]$ matrix,
has finite rank, see \cite[Lemma 2.1.5]{tsaiphd}. 
Note that the set of all nonsingular points is open in $\FF^n$.
Proposition \ref{regpoint} strengthens Theorem \ref{mainthm} in a special case.

\begin{prop}
\label{regpoint}
Let $N$ be as above. Suppose that $\FF=\CC$ and system \eqref{peq} has a nonsingular point $\x^0$. 
Let $U$ be a nonempty simply connected open subset of the set of all nonsingular points.
Then the following are equivalent for every $\mathbf{q} \in A_m(\FF)^n$:
\begin{enumerate}
\item[(1'')] Every convergent $\mathbf{u} \in \FF[[\x-\x^0]]^n$ which solves $\mathbf{p}[\mathbf{u}]=0$ for all $\mathbf{p} \in N$,
also solves $\mathbf{q}[\mathbf{u}]=0$.
\item[(2)] There exists a nonzero $w \in \fx$ such that $w \, \mathbf{q} \in N$.
\item[(3)] Every $n$-tuple $\mathbf{u}$ of analytic functions on $U$ which solves $\mathbf{p}[\mathbf{u}]=0$ for all $\mathbf{p} \in N$,
also solves $\mathbf{q}[\mathbf{u}]=0$.
\end{enumerate}
\end{prop}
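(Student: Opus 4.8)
The plan is to prove the cycle of implications $(2)\Rightarrow(1'')$, $(1'')\Rightarrow(3)$ — or alternatively $(1'')\Rightarrow(2)\Rightarrow(3)\Rightarrow(1'')$ — by leaning on Theorem \ref{mainthm} for the parts that are already available and supplying a new argument only for the statements involving global solutions on $U$. The implication $(2)\Rightarrow(1'')$ is immediate from the continuity argument already used in the proof of Theorem \ref{mainthm}: if $w\mathbf{q}\in N$ then $(w\mathbf{q})[\mathbf{u}]=0$ for any solution $\mathbf{u}$, and since the zero set of the nonzero polynomial $w$ meets every ball around $\x^0$ in a set whose complement is dense, $\mathbf{q}[\mathbf{u}]$ vanishes on a dense open set and hence everywhere. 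The implication $(1'')\Rightarrow(2)$ is a special case of $(1')\Rightarrow(2)$ in Theorem \ref{mainthm}: a solution convergent at the single point $\x^0$ is in particular an element of $\bigcup_{\x^0\in U}\FF[[\x-\x^0]]^n$ once we note $\x^0\in U$, so any $\mathbf{q}$ satisfying $(1'')$ also satisfies $(1')$ with this $U$. Thus the only genuinely new content is connecting $(3)$ to the rest.

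For $(2)\Rightarrow(3)$ the same continuity/density argument works verbatim: $w\mathbf{q}\in N$ forces $(w\mathbf{q})[\mathbf{u}]=0$ on $U$, and since $U$ is open and the zero locus of $w$ is a proper analytic subset, $\mathbf{q}[\mathbf{u}]$ vanishes on a dense subset of $U$ and therefore on all of $U$ by continuity. The substantive implication is $(3)\Rightarrow(1'')$ (or $(3)\Rightarrow(2)$). Here is where the hypotheses $\FF=\CC$, $U$ simply connected, and $U$ contained in the nonsingular locus must be used, and this is the main obstacle. The point is to promote a germ solution at $\x^0$ to a global solution on $U$: take any convergent $\mathbf{u}\in\FF[[\x-\x^0]]^n$ solving $\mathbf{p}[\mathbf{u}]=0$ for all $\mathbf{p}\in N$. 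Because $\x^0$ is a nonsingular point, the module $A_m(\FF)^n/N$ has finite rank (by \cite[Lemma 2.1.5]{tsaiphd}), so near any nonsingular point the solution sheaf is locally a finite-dimensional local system — equivalently the associated $\mathcal{D}$-module is an integrable connection on the nonsingular locus. Therefore the germ $\mathbf{u}$ at $\x^0$ extends uniquely along every path in the nonsingular locus, and simple connectivity of $U$ guarantees the continuations agree, yielding a single-valued analytic $\mathbf{u}$ on all of $U$ solving the system there. Applying $(3)$ gives $\mathbf{q}[\mathbf{u}]=0$ on $U$, in particular the original germ at $\x^0$ satisfies $\mathbf{q}[\mathbf{u}]=0$, which is $(1'')$.

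Assembling the pieces: $(2)\Rightarrow(1'')$ and $(2)\Rightarrow(3)$ are the continuity arguments; $(1'')\Rightarrow(2)$ is Theorem \ref{mainthm} applied with this $U$; and $(3)\Rightarrow(1'')$ is the analytic-continuation argument of the previous paragraph. This closes the cycle among $(1'')$, $(2)$, $(3)$. The delicate points to get right are: verifying that ``nonsingular'' in the sense of \cite[Definition 2.1.3]{tsaiphd} really does make the solution sheaf a local system on $U$ (this is precisely the content of the finite-rank/integrable-connection correspondence, so I would cite it rather than reprove it), and checking that the monodromy obstruction to single-valued continuation is exactly killed by simple connectivity — standard once the local-system structure is in hand. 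One should also remark why $\FF=\CC$ is needed: the analytic-continuation step is a complex-analytic phenomenon with no real counterpart, which is why this strengthening of Theorem \ref{mainthm} is stated only over $\CC$.
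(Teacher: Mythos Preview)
Your step $(1'')\Rightarrow(2)$ contains a direction-of-implication error. You argue that a convergent solution at $\x^0$ lies in $\bigcup_{\x'\in U}\FF[[\x-\x']]^n$ and conclude that $(1'')$ implies $(1')$. But this is backwards: condition $(1')$ requires $\mathbf{q}$ to annihilate solutions at \emph{every} point of $U$, while $(1'')$ only requires this at the single point $\x^0$, so $(1'')$ is the weaker hypothesis and $(1')\Rightarrow(1'')$ is the trivial direction, not the one you need. Theorem~\ref{mainthm} therefore does not deliver $(1'')\Rightarrow(2)$ for free; this implication is precisely the new content of the proposition, and it is where nonsingularity and $\FF=\CC$ must enter. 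A secondary gap is that the statement does not assume $\x^0\in U$, so your $(3)\Rightarrow(1'')$ continuation argument, which starts from a germ at $\x^0$ and walks inside $U$, also does not apply as written.

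The paper avoids both issues by not invoking Theorem~\ref{mainthm} here at all. It picks an open ball $B$ around $\x^0$ inside the nonsingular locus and applies the Cauchy--Kovalevskaya--Kashiwara theorem: the space of analytic solutions on $B$ has dimension equal to the rank of $A_m(\FF)^n/N$, so every convergent germ at $\x^0$ is the restriction of an analytic solution on $B$. This identifies $(1'')$ with ``$(3)$ for $B$'', and then the equivalence $(2)\Leftrightarrow(3)$ --- valid for any simply connected open subset of the nonsingular locus, hence for $B$ as well as for $U$ --- is cited directly from \cite[Proposition~2.1.9]{tsaiphd}. Your analytic-continuation idea is essentially the local-system reformulation of the same CKK step; it could repair your argument if you used it to prove $(1'')\Rightarrow(1')$ (continue a solution from an arbitrary nonsingular point back to $\x^0$ through the nonsingular locus, apply $(1'')$ there, and propagate $\mathbf{q}[\mathbf{u}]=0$ back), after which Theorem~\ref{mainthm} would close the cycle. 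As written, however, the cycle is broken at $(1'')\Rightarrow(2)$.
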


\begin{proof}
Pick an open ball $B$ around $\x^0$ in the set of all nonsingular points. By the Cauchy-Kovalevskaya-Kashiwara theorem\footnotemark
\footnotetext{This version is from \cite[Theorem 2.1.8]{tsaiphd} or \cite[Section 4]{oaku}.
The original reference is Kashiwara's master's thesis  \cite[Theorem 2.3.1]{kashiwara}.},
the dimension of the space of all analytic solutions on $B$ is finite and equal to the rank of $A_m(\FF)^n/N$.  It follows that every 
convergent power series solution at $\x^0$ comes from some analytic solution on $B$.
Therefore, the equivalence of (2) and (1'') follows from the equivalence of (2) and (3).
The equivalence of (2) and (3) is a reformulation of \cite[Proposition 2.1.9]{tsaiphd} (which is also a corollary of the Cauchy-Kovalevskaya-Kashiwara theorem).
\end{proof}

Proposition \ref{regpoint} also holds for some singular points $\x^0$ and some open $U$ that are not simply connected (see Example \ref{singex})
but not for all of them (see Example \ref{singex2}).

\begin{ex}
\label{singex}
Take $\FF=\CC$, $U=\FF \setminus \{0\}$, $x^0=0$ and $p=x^2 D^2-2 x D+2$. Clearly $x^0$ is a singular point of $p$ and $U$ is not simply connected.
We claim that (1''), (2) and (3) are equivalent for every $q \in A_1(\FF)$. Suppose that $q \in A_1(\FF)$ satisfies either (1'') or (3).
Every convergent power series solution at $x^0$ and every analytic solution on $U$
of $p[u]=0$ are of the form $u=c_1 x+c_2 x^2$. Therefore, $q[x]=q[x^2]=0$. 
It follows that $q$ also satisfies (1') of Theorem \ref{mainthm} and so (2) is true.
The converse is clear.
\end{ex}

\begin{ex}
\label{singex2}
Take $U=\FF \setminus \{0\}$, $x^0=0$ and $p=x^2 D^2-x D+\frac{3}{4}$ then a general solution of $p[u]=0$ is $u=c_1 \sqrt{x}+c_2 x \sqrt{x}$. 
Therefore, $p[u]=0$ has no convergent power series solution at $x^0$ and no analytic solution on $U$ which
implies that (1'') and (3) are trivially true for all $q$. On the other hand, (2) is false for some $q$.
\end{ex}

\subsection{Generic solution}

Let $N$ be submodule of $A_m(\FF)^n$ generated by the rows of the $[p_{ij}]$ matrix of system \eqref{peq} and let $M=A_m(\FF)^n/N$. 
Let $\pi \colon A_m(\FF)^n \to M$ be the canonical projection and let $y_i=\pi(\mathbf{e}_i), i=1,\ldots, n$ be the projections of the standard basis of $A_m(\FF)^n$.
We will call $(y_1,\ldots,y_n)$ the \textit{generic solution} of system \eqref{peq}, see \cite[Definition 3.5.1 and Example 3.5.2]{quadrat2}.
To show that the generic solution is indeed a solution, note that by the definition of $N$, $\sum_{j=1}^n p_{ij} \mathbf{e}_j \in N$ for every $i=1,\ldots,k$. 
It follows that $\sum_{j=1}^n p_{ij} y_j =\sum_{j=1}^n p_{ij} \pi(\mathbf{e}_i) =\pi(\sum_{j=1}^n p_{ij} \mathbf{e}_j) =0$ for every $i=1,\ldots,k$ as desired.

All solutions of system \eqref{peq} can be obtained by specializing the generic solution, see \cite[Theorem 1.1.1]{quadrat2}.
Let us explain the details. For every $\mathbf{x}^0 \in \FF^n$ write $\mathcal{F}_{\mathbf{x}^0}$ for the abelian group of all 
convergent power series in $\FF[[\x-\x^0]]$. Note that $\mathcal{F}_{\x^0}$ has the structure of a left $A_m(\FF)$-module
in the obvious way. Let $\hmf(M,\mathcal{F}_{\x^0})$ be the set of all $A_m(\FF)$-module homomorphisms from $M$ to $\mathcal{F}_{\x^0}$.
For every $\varphi \in \hmf(M,\mathcal{F}_{\x^0})$, $(\varphi(y_1),\ldots,\varphi(y_n))$ is a solution of system \eqref{peq} at point $\x^0$
and every solution can be obtained this way.

We will now rephrase Theorem \ref{mainthm} in this new terminology. Note that every element $m \in M$ is of the form $m=\pi(\mathbf{q})=q_1 y_1+\ldots+q_n y_n$
for some $\mathbf{q}=(q_1,\ldots,q_n) \in A_m(\FF)^n$ where $(y_1,\ldots,y_n)$ is the generic solution.

\begin{cor}
Let $U$ be a nonempty open subset of $\FF^m$ and let $M$ be as above. For every element $m \in M$, the following are equivalent:
\begin{enumerate}
\item[(1)] For every $\x^0 \in \FF^m$ and every $\varphi \in \hmf(M,\mathcal{F}_{\x^0})$ we have that $\varphi(m)=0$.
\item[(1')] For every $\x^0 \in U$ and every $\varphi \in \hmf(M,\mathcal{F}_{\x^0})$ we have that $\varphi(m)=0$.
\item[(2)] There exists a nonzero $w \in \fx$ such that $w \, m=0$.
\end{enumerate}
\end{cor}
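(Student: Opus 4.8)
The plan is to deduce the corollary directly from Theorem \ref{mainthm} by making explicit the standard dictionary between $A_m(\FF)$-module homomorphisms into $\mathcal{F}_{\x^0}$ and convergent local solutions. Fix $\x^0 \in \FF^m$. First I would record that the assignment $\varphi \mapsto (\varphi(y_1),\ldots,\varphi(y_n))$ is a bijection from $\hmf(M,\mathcal{F}_{\x^0})$ onto the set of convergent $\mathbf{u} \in \FF[[\x-\x^0]]^n$ that solve $\mathbf{p}[\mathbf{u}]=0$ for every $\mathbf{p}\in N$. Indeed, every $\varphi$ produces such a solution because $\varphi$ kills $\pi(N)=0$, so $\mathbf{p}[\,(\varphi(y_1),\ldots,\varphi(y_n))\,]=0$ for $\mathbf{p}\in N$; conversely a solution $\mathbf{u}$ of the system gives a well-defined homomorphism $\pi(\mathbf{r})\mapsto \mathbf{r}[\mathbf{u}]$, the well-definedness being exactly the statement that $\mathbf{r}[\mathbf{u}]=0$ whenever $\mathbf{r}\in N$; and the map is injective because $y_1,\ldots,y_n$ generate $M$. (This is precisely the content of the facts quoted from \cite{quadrat2} just before the statement.)

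Next I would compute, writing $m=\pi(\mathbf{q})=q_1 y_1+\ldots+q_n y_n$ and $\mathbf{u}=(\varphi(y_1),\ldots,\varphi(y_n))$, that $A_m(\FF)$-linearity of $\varphi$ gives
$$\varphi(m)=\varphi\Big(\sum_{j=1}^n q_j y_j\Big)=\sum_{j=1}^n q_j[\varphi(y_j)]=\mathbf{q}[\mathbf{u}],$$
so that $\varphi(m)=0$ holds if and only if $\mathbf{q}[\mathbf{u}]=0$. Consequently, quantifying over all $\varphi$ and all $\x^0\in\FF^m$, condition (1) of the corollary is word for word condition (1) of Theorem \ref{mainthm}, and condition (1$'$) of the corollary is condition (1$'$) of Theorem \ref{mainthm} (here one uses that, for fixed $\x^0$, the $\mathbf{u}$'s arising from $\hmf(M,\mathcal{F}_{\x^0})$ are exactly \emph{all} convergent solutions of the system at $\x^0$). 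Finally, $w\,m=0$ in $M=A_m(\FF)^n/N$ means exactly $w\,\mathbf{q}\in N$, which is condition (2) of Theorem \ref{mainthm}.

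With this translation the three conditions of the corollary coincide with the three conditions of Theorem \ref{mainthm}, so the equivalence is immediate. I do not anticipate any real obstacle: the only point that deserves an explicit sentence is the well-definedness of the homomorphism associated to a solution, namely that $\mathbf{r}[\mathbf{u}]$ depends only on $\pi(\mathbf{r})$, and this follows at once from $A_m(\FF)$-linearity of the action of $A_m(\FF)$ on $\mathcal{F}_{\x^0}$ together with the hypothesis that $\mathbf{u}$ annihilates every element of $N$.
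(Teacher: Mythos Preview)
Your proposal is correct and follows exactly the approach the paper intends: the paper states the corollary as a mere rephrasing of Theorem \ref{mainthm} (it gives no separate proof), and your explicit dictionary $\varphi\leftrightarrow\mathbf{u}$, $\varphi(m)=\mathbf{q}[\mathbf{u}]$, $w\,m=0\Leftrightarrow w\,\mathbf{q}\in N$ is precisely the translation the paper has in mind. The only thing the paper makes explicit that you might cite directly is the bijection with solutions, which it attributes to \cite[Theorem 1.1.1]{quadrat2}.
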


Proposition \ref{regpoint} can be rephrased similarly.

\subsection{Rapidly decreasing solutions}

Recall that a function is rapidly decreasing if it belongs to 
$\mathcal{S}:= \{f \in \mathcal{C}^{(\infty)}(\RR^m) \mid \sup_{x \in \RR^m} | x^\alpha D^\beta f(x) | < \infty$ for all $\alpha,\beta \in \NN^m\}$.
We define the $\mathcal{S}$-closure of system \eqref{peq} as the set of all equations \eqref{qeq} that vanish on every rapidly decreasing solution of system \eqref{peq}.
The $\mathcal{S}$-closure behaves very differently from the Weyl closure as the following example shows:

\begin{ex}
\label{sexample}
Let $q=D+x \in A_1(\RR)$ and $p=q^\ast q=(-D+x)(D+x)=-D^2+x^2-1$. (Recall that the standard involution on $A_m(\FF)$ is defined by $D_i^\ast=-D_i,x_j^\ast=x_j$ for every $i,j=1,\ldots,m$
and $\alpha^\ast=\bar{\alpha}$ for every $\alpha \in \FF$.) We claim that $q$ belongs to the $\mathcal{S}$-closure of $p$ but it does not belong to the Weyl closure of $p$.
The general solution of $q[u]=0$ is $u=c \, e^{-x^2/2}$ which is rapidly decreasing and the general solution of $p[u]=0$ is $u=c_1 e^{-x^2/2}+c_2 v$ where $v=e^{-x^2/2} \int e^{x^2} \, dx$ is not rapidly decreasing.
It follows that $q$ belongs to the $\mathcal{S}$-closure  of $p$. Since $q[v]=e^{x^2/2} \ne 0$, we have that $q$ does not belong to the Weyl closure of $p$.
\end{ex}

An important advantage of $\mathcal{S}$ is that it has an inner product and that $q^\ast$ is the adjoint of $q$ with respect to this inner product.
A disadvantage is that the $\mathcal{S}$-closure is often equal to $A_m(\FF)^n$ because often there is no rapidly decreasing solution.
Let $N'$ be the $\mathcal{S}$-closure of a submodule $N$ of $A_m(\FF)^n$. By using the inner product one can show that $N'$ is a real submodule
of $A_m(\FF)^n$ in the sense that if 
$$\sum_i \mathbf{p}_i^\ast \mathbf{p}_i= \sum_j (\mathbf{h}_j^\ast \mathbf{q}_j+\mathbf{q}_j^\ast \mathbf{h}_j) \quad ( \text{in }  A_m(\FF)^{n \times n})$$
for some $\mathbf{p}_i, \mathbf{h}_j \in A_m(\FF)^n$ and $\mathbf{q}_j \in N'$ then $\mathbf{p}_i \in N'$ for all $i$.
From the perspective of noncommutative real algebraic geometry (see \cite[Example 1.3 and Theorem 1.6]{c1} and \cite[Theorem 2]{c2})
it would be interesting to know when $N'$ is the the smallest real submodule of $A_m(\FF)^n$ which contains $N$ (i.e. when $N'$ is the real radical of $N$).

\end{document}